\newtheorem{theorem}{Theorem}[section]
\newtheorem{corollary}{Corollary}[section]
\newtheorem{remark}{Remark}[section]
\numberwithin{equation}{section}
\def\qed{\hfill$\Box$\medskip}
\begin{document}

\noindent\makebox[60mm][l]{\tt {\large}}

\bigskip
\noindent{
\begin{center}
\Large\bf A note on the passage time of  finite state  Markov chains\footnote{{  The project is partially supported by NSFC Grant No.11131003.}}
\end{center}
}

\noindent{
\begin{center}Wenming Hong\footnote{School of Mathematical Sciences
\& Laboratory of Mathematics and Complex Systems, Beijing Normal
University, Beijing 100875, P.R. China. Email: wmhong@bnu.edu.cn}  \ \ \
 Ke Zhou\footnote{ {  School of Statistics, University of International Business and Economics, Beijing 100029, P.R. China. Email: zhouke@uibe.edu.cn}}
\end{center}
}

\vspace{0.1 true cm}

\begin{center}
\begin{minipage}[c]{12cm}
\begin{center}\textbf{Abstract}\end{center}
\bigskip
Consider a Markov chain  with finite state $\{0, 1, \cdots, d\}$.  We give the generation functions (or Laplace transforms) of absorbing  (passage) time in the following two situations : (1) the absorbing time of state $d$ when the chain starts from any state $i$ and absorbing at state $d$; (2) the passage time of any state $i$ when the chain starts from the stationary distribution {  supposed} the chain is time reversible and ergodic.  Example shows that it is more convenient compared with the existing methods, especially we can calculate the expectation of the absorbing time directly.

\


\mbox{}\textbf{Keywords:}\quad Markov chain, absorbing time, passage time,
generation functions,
Laplace transforms, eigenvalues, stationary distribution. \\
\mbox{}\textbf{Mathematics Subject Classification (2010)}:  60E10,
60J10, 60J27, 60J35.
\end{minipage}
\end{center}


\section{ Discrete time\label{s2}}
\subsection{Absorbing  time when the process starting from any   state $i$ }
Consider  the discrete time Markov chain $\{X_n\}_{n\geq 0}$ with finite states $\{0, 1, \cdots, d\}$ and absorbing
at state $d$,  the transition probability matrix $P$ is given by
\begin{equation*}
P=\left(
 \begin{array}{cccccc}
  r_0 & p_{0,1}  &  & \cdots & p_{0,d-1} & p_{0,d} \\
   q_{1,0} & r_1  & & \cdots & p_{1,d-1}& p_{1,d}\\
   \vdots & \ddots  & & \ddots & \ddots & \vdots\\
   q_{d-1,0}& q_{d-1,1}& & \cdots &  r_{d-1} & p_{d-1,d} \\
    0 & 0 & & \cdots & 0 & 1\\
     \end{array}
      \right)_{(d+1)\times(d+1)}.
\end{equation*}

For ~ $ 0\leq i\leq d$, let $\tau_{i,d}$ be the absorbing time of state $d$ starting from
$i$, i.e.,
$$
\tau_{i,d}:=\inf\{n\geq 1, X_n=d \ \mbox{when} \ X_0=i \},
$$
and $f_{i}(s)$ be the generation function of $\tau_{i,d}$,
\begin{equation}\label{f}
f_{i}(s)=\mathbb{E}s^{\tau_{i,d}}~~~\text{for}~ 0\leq i\leq d,
\end{equation}
we have
\begin{theorem} \label{th1}  For $1\leq j\leq d+1$,
  denote $A_{j}(s)$ as the $d\times d$ sub-matrix by deleting the $(d+1)^{th}$ row and
the $j^{th}$ column of the matrix  $I_{d+1}-sP$. Then, for ~ $ 0\leq i\leq d$, we have
\begin{equation}\label{fi}
f_{i}(s)=(-1)^{d-i}\frac{\det A_{i+1}(s)}{\det A_{d+1}(s)}.
\end{equation}\qed
\end{theorem}

\begin{remark}\label{R1}As a consequence (see Corollary \ref{c1} below), for  the birth-death  and more general  the skip-free (upward jumps {  is} only of unit size, and there is no restriction on downward jumps)
Markov chain  with finite state $\{0, 1, \cdots, d\}$ and absorbing
at state $d$, the absorbing
time  is distributed as a summation  of $d$ independent geometric
(or exponential) random variables.

There are many authors give out different proofs to the results. For the birth and death chain,
the well-known results can be traced back to  Karlin and McGregor (\cite{KM}, 1959) Keilson (\cite{Keillog}, 1971; \cite{Keil}).
Kent and Longford (\cite{Kent}, 1983) proved the result for the discrete time version (nearest random walk) although
they have not specified the result as usual form (section 2, \cite{Kent}). Fill (\cite{F1}, 2009) gave the first stochastic
 proof to both nearest random walk and birth and death chain cases via duality which was established in \cite{DF}.
  Diaconis and Miclo (\cite{DM}, 2009) presented another probabilistic proof for birth and death chain. Gong et al (\cite{Mao}, 2012) gave a similar result in the case that the state space is
  $\mathbb{Z^{+}}$.
For the  skip-free chain, Brown and Shao (\cite{BS}, 1987) first proved  the
result in continuous time situation; Fill
(\cite{F2}, 2009) gave a stochastic proof to both discrete
 and continuous time cases also by using the duality, and  considered the general finite-state Markov chain situation  when the chain starts from {  state $0$}.

However,  the existing proofs we mentioned above are heavily relied on the initial state being $0$, no matter the ``analysis" method by Brown and Shao (\cite{BS}, 1987) and the ``stochastic" method by Fill
(\cite{F2}, 2009), etc..
The first purpose of this paper (Theorem \ref{th1} and \ref{thc}) is to improve the result to the general situation: the chain starts from any state $i$ (not just from state $0$ only (\cite{F2}, 2009)).
 In
particulary, the results generalize the well-known theorems for the
birth-death (Karlin and McGregor \cite{KM}, 1959) and  the skip-free
(\cite{BS} and \cite{F2}) Markov chain.

\end{remark}

Before proving the Theorem, let us at first to recover the results for the skip-free (and then the birth-death) discrete time
Markov chain (Fill \cite{F2}, 2009).

\begin{corollary}\label{c1}
Assume  $p_{i,j}=0$ for $j-i>1$. We have
\begin{equation}\label{f0}
f_{0}(s)=\prod_{i=0}^{d-1} \left[\frac{(1-\lambda_i)s}{1-\lambda_i
s} \right],
\end{equation}
where $\lambda_0, \cdots, \lambda_{d-1}$ are the $d$ non-unit eigenvalues of $P$.

In particular, if all of the eigenvalues are real and nonnegative,
then the hitting time is distributed as the sum of $d$ independent geometric random variables
with parameters $1-\lambda_i$.\\
\end{corollary}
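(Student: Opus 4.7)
The plan is to apply Theorem~2.1 at $i=0$ and then compute the two determinants $\det A_1(s)$ and $\det A_{d+1}(s)$ explicitly using the skip-free structure. The key observation is that when $p_{i,j}=0$ for $j>i+1$, the only nonzero entries of $I_{d+1}-sP$ strictly above the diagonal are the super-diagonal entries $-sp_{i,i+1}$. Deleting the last row and the first column to form $A_1(s)$ shifts this super-diagonal onto the main diagonal of $A_1(s)$, pushes the original diagonal entries $1-sr_i$ onto the sub-diagonal, and leaves every entry above the new main diagonal equal to zero. Consequently $A_1(s)$ is lower triangular, and I can read off
\[
\det A_1(s) \;=\; \prod_{i=0}^{d-1}\bigl(-s\,p_{i,i+1}\bigr) \;=\; (-s)^{d}\prod_{i=0}^{d-1} p_{i,i+1}.
\]

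For the denominator, I would exploit the block form of $P$: since the last row of $P$ is $(0,\ldots,0,1)$, cofactor expansion along that row immediately gives $\det(I_{d+1}-sP) = (1-s)\det A_{d+1}(s)$. On the other hand, $\det(I_{d+1}-sP) = (1-s)\prod_{i=0}^{d-1}(1-\lambda_i s)$ after factoring out the unit eigenvalue of the stochastic matrix $P$, so $\det A_{d+1}(s) = \prod_{i=0}^{d-1}(1-\lambda_i s)$. Substituting both determinants into the formula of Theorem~2.1 and combining the sign factors $(-1)^d$ and $(-s)^d$ yields
\[
f_0(s) \;=\; \frac{s^{d}\prod_{i=0}^{d-1} p_{i,i+1}}{\prod_{i=0}^{d-1}(1-\lambda_i s)}.
\]
The prefactor $\prod_i p_{i,i+1}$ is then identified by evaluating at $s=1$: since $d$ is reached from $0$ almost surely, $f_0(1)=1$ forces $\prod_{i=0}^{d-1} p_{i,i+1} = \prod_{i=0}^{d-1}(1-\lambda_i)$, giving the claimed factorization for $f_0(s)$.

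For the concluding probabilistic statement, I would simply note that when each $\lambda_i\in[0,1)$, the factor $(1-\lambda_i)s/(1-\lambda_i s)$ is the probability generating function of a geometric random variable on $\{1,2,\ldots\}$ with success parameter $1-\lambda_i$, so multiplicativity of PGFs under independent sums identifies $\tau_{0,d}$ in law with a sum of $d$ independent geometrics with these parameters. The main obstacle I anticipate is bookkeeping rather than substance: being careful about which row and which column are deleted in forming $A_1(s)$, and tracking the sign $(-1)^{d}$ together with $(-s)^d$ so that everything cancels correctly against Theorem~2.1. Once the triangularity of $A_1(s)$ is observed, the remainder is an immediate consequence of the characteristic polynomial factorization of $P$.
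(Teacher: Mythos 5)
Your proposal is correct and follows essentially the same route as the paper: compute $\det A_{d+1}(s)=\prod_{i}(1-\lambda_i s)$ from the factorization of $\det(I_{d+1}-sP)$, observe that the skip-free assumption makes $A_1(s)$ triangular so that $\det A_1(s)=(-s)^d\prod_i p_{i,i+1}$, and then pin down $\prod_i p_{i,i+1}=\prod_i(1-\lambda_i)$ by evaluating $f_0$ at $s=1$. The only additions are your explicit justification of the triangularity and the final PGF identification, both of which the paper leaves implicit.
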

\begin{proof} Note that, $1$ is an eigenvalue of
$P$ evidently. So on the one hand
~${\det(I_{d+1}-sP)}=(1-s)\prod_{i=0}^{d-1}(1-\lambda_i s)$
(where $\lambda_0, \cdots, \lambda_{d-1}$ are the $d$ non-unit
eigenvalues of $P$); on the other hand  we have
${\det (I_{d+1}-sP)}=(1-s)\times \det A_{d+1}(s) $ from (\ref{p}); it's trivial to show that
\begin{equation}\label{A}
 {\det A_{d+1}(s)}=\prod_{i=0}^{d-1}(1-\lambda_i s).
\end{equation}
From the definition of $A_1$, it is easy to see
\begin{equation}\label{A1}
\det A_{1}(s)=(-1)^{d}p_{0,1}p_{1,2}\cdots p_{d-1,d}s^{d}.
\end{equation}
By (\ref{fi}) and (\ref{A}) we have
 \begin{equation*}
 \det A_{1}(1)=(-1)^{d} f_0(1)\cdot {\det A_{d+1}(1)}=(-1)^{d} f_0(1)\cdot
 \prod_{i=0}^{d-1}(1-\lambda_i).\nonumber
 \end{equation*}
And from (\ref{A1}), we get
$\det A_{1}(1)=(-1)^{d} p_{0,1}p_{1,2}\cdots p_{d-1,d}$. Recall that
$f_0(1)=1$,  by (\ref{f}), we obtain
\begin{equation}\label{cp}
 p_{0,1}p_{1,2}\cdots p_{d-1,d}=\prod_{i=0}^{d-1}(1-\lambda_i ).
\end{equation}
Then by (\ref{A1}) and (\ref{cp})
\begin{equation}\label{A2}
\text{det}A_{1}(s)=(-1)^{d} \prod_{i=0}^{d-1}(1-\lambda_i )s^{d},
\end{equation}
and (\ref{f0}) holds from (\ref{A}) and (\ref{A2}) directly .
\end{proof}

\begin{remark}\label{R2}The following example shows that  Theorem \ref{th1} is more convenient compared
with the existing methods in Corollary \ref{c1}, especially we can calculate the expectation of the absorbing time directly from (\ref{fi}).

\end{remark}

Consider a Markov chain with $d+1$ states $\{0, 1, 2, \ldots d\}$
whose transition matrix  $P$ can be  given by

\begin{equation*}\label{p}
P=\left(
 \begin{array}{cccccc}
  q & p  &  & \cdots & 0 & 0 \\
   q & 0  & & \ddots & 0& 0\\
   \vdots & \ddots  & & \ddots & \ddots & \vdots\\
   q& 0& & \cdots &  0 & p \\
    0 & 0 & & \cdots & 0 & 1\\
     \end{array}
      \right)_{(d+1)\times(d+1)},
\end{equation*} where $p+q=1$.

\begin{corollary}\label{c2} For ~ $ 0\leq i\leq d$,
\begin{equation}\label{aa}
f_{i}(s)=\frac{p^{d-i}s^{d-i}(1-s)+p^{d}qs^{d+1}}{1-s+p^{d}qs^{d+1}},
\end{equation}
and we have
\begin{equation}\label{bb}
\mathbb{E}{\tau_{i,d}}=\frac{1-p^{d-i}}{p^{d}q}.
\end{equation}
\end{corollary}
\noindent{\it Proof }.
Take full advantage of $p+q=1$, we can calculate that
\begin{equation*}
\det A_{i+1}(s)=(-1)^{d-i}\frac{p^{d-i}s^{d-i}(1-s)+p^{d}qs^{d+1}}{1-ps},
\end{equation*}
\begin{equation*}
\det A_{d+1}(s)=\frac{1-s+p^{d}qs^{d+1}}{1-ps}.
\end{equation*}
We obtain (\ref{aa}) by using Theorem $2.1$. Recall that $\mathbb{E}{\tau_{i,d}}=f_{i}^{'}(1)$, we can get (\ref{bb}) by some calculation easily. \qed

 \noindent{\it Proof of Theorem \ref{th1}} By decomposing the first step, for $0\leq i\leq d$, the generation function of $\tau_{i,d}$ satisfies,
 \begin{equation*}
 \begin{split} f_{i}(s)&=r_{i}sf_{i}(s)+p_{i,i+1}sf_{i+1}(s)+p_{i,i+2}sf_{i+2}(s)+\cdots p_{i,d-1}sf_{d-1}(s)+p_{i,d}s\\
  &~~~~~q_{i,i-1}sf_{i-1}(s)+q_{i,i-2}sf_{i-2}(s)+\cdots+q_{i,0}sf_{0}(s).
\end{split}
\end{equation*}
These  system of equations are linear with respect to $f_{0}(s),~f_{1}(s)\cdots,~f_{d-1}(s)$. Using Cramer's Rule,
we can get (\ref{fi}) by solving from these equations. \qed

\subsection{ Passage time when starting from the stationary distribution}

{  Consider a discrete time Markov chain $\{X_n\}_{n\geq 1}$  with finite states $\{0, 1, \cdots, d\}$ starting from the stationary distribution $\pi:=(\pi_0,\pi_1,\dots,\pi_d)$,} the transition probability matrix $\widehat{P}$ is given by
\begin{equation*}\label{p}
\widehat{P}=\left(
 \begin{array}{cccccc}
  r_0 & p_{0,1}  &  & \cdots & p_{0,d-1} & p_{0,d} \\
   q_{1,0} & r_1  & & \cdots & p_{1,d-1}& p_{1,d}\\
   \vdots & \ddots & & \ddots & \ddots & \vdots\\
    q_{d,0} & q_{d,1} & & \cdots & q_{d,d-1} & r_d\\
     \end{array}
      \right)_{(d+1)\times(d+1)}.
\end{equation*}
In addition, write
\begin{equation}\label{d}
 D=\left(
 \begin{array}{cccccc}
  1 & -\pi_{1}  &  & \cdots & -\pi_{d-1} & -\pi_{d} \\
  0 & 1-r_{1}s  & & \cdots & -p_{1,d-1}s& -p_{1,d}s\\
   \vdots & \ddots  & & \ddots & \ddots & \vdots\\
    0 & -q_{d,1}s & & \cdots & -q_{d,d-1}s & 1-r_ds\\
     \end{array}
      \right),
 \end{equation} and
 \begin{equation}\label{d0}
 D_{0}=\left(
 \begin{array}{cccccc}
  \pi_{0} & -\pi_{1}  &  & \cdots & -\pi_{d-1} & -\pi_{d} \\
  q_{1,0}s  & 1-r_{1}s & & \cdots & -p_{1,d-1}s& -p_{1,d}s\\
   \vdots & \ddots & & \ddots & \ddots & \vdots\\
    q_{d,0}s  & -q_{d,1}s & & \cdots & -q_{d,d-1}s & 1-r_ds\\
     \end{array}
      \right).
 \end{equation}
 \begin{theorem}\label{gsta}
 When the chain starts  from $X_0$ with the stationary distribution $\pi$, and
 \begin{equation}\label{tao}
\tau:=\inf\{n\geq 0, X_n=0 \ \},
\end{equation}
 be the passage  time of state $0$.
Denote  $g_{\pi}(s)$ as the generation function of $\tau$, i.e., $g_{\pi}(s)=\mathbb{E}_{\pi}s^{\tau}$;
 we have
\label{ths} \begin{equation*}
g_{\pi}(s)=\frac{\det D_{0}}{\det D},\end{equation*}where $D$ and $D_{0}$ are given in (\ref{d}) and (\ref{d0}) respectively.
\end{theorem}

{  Specifically, if the chain is time reversible and ergodic, Brown(\cite{B}, 1999) point out the elegant connection between the passage time stating from the stationary distribution and the interlacing eigenvalues theorem of linear algebra.
Recently, this result also proved by  Fill and Lyzinski(\cite{FL}, 2013) with  a   stochastic  method.
 In what follows, we will reprove it directly as a corollary of Theorem \ref{gsta}.}

 \begin{corollary}\label{csta} \label{Ppi}Let $\lambda_1, \cdots, \lambda_{d}$ be the $d$ non-unit eigenvalues of $\widehat{P}$ (we assume $\lambda_0=1$), and  $\gamma_1, \cdots, \gamma_{d}$ be the $d$ eigenvalues of $\widehat{P}_{0}$, which is the sub-matrix obtained by deleting the first row and the first column of $\widehat{P}$.
Then we have
\begin{equation*}
g_{\pi}(s)=\left(\prod_{i=1}^{d}\frac{1-\gamma_{i}}{1-\lambda_{i}}\right) \prod_{i=1}^{d}\left(\frac{1-\lambda_is}{1-\gamma_i
s}\right).
\end{equation*}
\end{corollary}

\noindent {\it Proof of Corollary \ref{csta}} ~~
   It is easy to see that (recall $\gamma_1, \cdots, \gamma_{d}$ are the eigenvalues of $\widehat{P}_{0}$),
\begin{equation} \label{D}
\det D=\prod_{i=1}^{d}(1-\gamma_i s).
 \end{equation}

In what follows we will show
\begin{equation}\label{D0}
\det D_{0}=\pi_{0}\prod_{i=1}^{d}(1-\lambda_is).
 \end{equation}
 Define $e_1=(1,0,\dots,0)$, and recall that $\pi=(\pi_0,\pi_1,\dots,\pi_d)$. Then, we have \begin{equation*}
\det D_{0}=\left |
 \begin{array}{cc}
  0 &  \pi\\
  e_1^{T} & I-s\widehat{P}
  \end{array}
  \right |.
 \end{equation*} If we let $\Pi=\text{diag}(\pi_0,\pi_1,\dots,\pi_d)$, the reversibility of $\widehat{P}$ implies that $\Pi^{1/2}\widehat{P}\Pi^{-1/2}$ is a real symmetric matrix. Thus there exist an orthogonal matrix $U$ such that
 \begin{equation}\label{lf1}
U\Pi^{1/2}\widehat{P}\Pi^{-1/2}U^{T}=\text{diag}(\lambda_{0},\lambda_{1},\dots,\lambda_{d}).
 \end{equation} We can calculate that
 \begin{equation*} \begin{split}
& \left (
 \begin{array}{cc}
  1 &  0\\
  0 & U
  \end{array}
  \right )
 \left (
 \begin{array}{cc}
  1 &  0\\
  0 & \Pi^{1/2}
  \end{array}
  \right )
  \left (
 \begin{array}{cc}
  0 &  \pi\\
  e_1^{T} & I-s\widehat{P}
  \end{array}
  \right )
   \left (
  \begin{array}{cc}
  1 &  0\\
  0 & \Pi^{-1/2}
  \end{array}
  \right )
   \left (
  \begin{array}{cc}
  1 &  0\\
  0 & U^{T}
  \end{array}
  \right )\\
 =&
  \left (
  \begin{array}{cc}
  0 &  \pi\Pi^{-1/2}U^{T}\\
  U\Pi^{1/2}e_{1}^{T} & U\Pi^{1/2}(I-s\widehat{P})\Pi^{-1/2}U^{T}
  \end{array}
  \right )=\left (
  \begin{array}{cc}
  0 &  \pi\Pi^{-1/2}U^{T}\\
  U\Pi^{1/2}e_{1}^{T} &I-s\text{diag}(\lambda_{0},\lambda_{1},\cdots,\lambda_{d})
  \end{array}
  \right )
\end{split} \end{equation*}

{ It is easy to prove that $\lambda_{0}=1$ is the unique eigenvalue of maximum modulus of $\widehat{P}$. So the geometric multiplicity of $\widehat{P}$ corresponding to $\lambda_{0}$ is one (\cite{KM} P500 Perron's Theorem). On the one hand, $e_{1}U\Pi^{1/2}$ is {  a left eigenvector} corresponding to $\lambda_{0}$. $\pi$ is also the left eigenvector of $\lambda_{0}$. Because $\|e_{1}U\Pi^{1/2}\|=\|\pi\|=1$, we have
$e_{1}U\Pi^{1/2}=\pi$. So {  \begin{equation}\label{lff}
U\Pi^{1/2}e_{1}^{T}=(e_{1}U\Pi^{1/2})^{T}=\pi^{T}.\end{equation}}
On the other hand, $\Pi^{-1/2}U^{T}e_{1}^{T}$ is {  a right eigenvector} corresponding to eigenvalue $\lambda_{0}$, and $\mathbf{1}=\{1,1,\cdots,1\}$ is also the right eigenvector of $\lambda_{0}$. Because $\|\Pi^{-1/2}U^{T}e_{1}^{T}\|=\|\mathbf{1}\|=1$, we have $\Pi^{-1/2}U^{T}e_{1}^{T}=\mathbf{1}$, and \begin{equation}\label{lf2}\pi\Pi^{-1/2}U^{T}e_{1}^{T}=\pi\mathbf{1}=1\end{equation}
By (\ref{lf1}) and $\pi=\pi\widehat{P}$, \begin{equation*}\pi\Pi^{-1/2}U^{T}=\pi\widehat{P}\Pi^{-1/2}U^{T}=\pi\Pi^{-1/2}U^{T}\text{diag}(\lambda_{0},\lambda_{1},\dots,\lambda_{d}).\end{equation*}
Because $\lambda_{i}\neq 1$ for $i=1,2,\cdots d$, by (\ref{lf2}), $\pi\Pi^{-1/2}U^{T}$ must be equal to $e_1$.
By (\ref{lff}), we obtain
\begin{equation*}
\det D_{0}=\left |
  \begin{array}{cc}
  0 &  \pi\Pi^{-1/2}U^{T}\\
  U\Pi^{1/2}e_{1}^{T} &I-s\widehat{P}
  \end{array}
  \right |=\left |
  \begin{array}{cc}
  0 &  e_1\\
  \pi^{T} &I-s\widehat{P}
  \end{array}
  \right |=\pi_{0}\prod_{i=1}^{d}(1-\lambda_is),
\end{equation*}}
which we get (\ref{D0}).
Combine (\ref{D}) and (\ref{D0}), we obtain
 \begin{equation*}
g_{\pi}(s)=\frac{\pi_{0}\prod_{i=1}^{d}(1-\lambda_is)}{\prod_{i=1}^{d}(1-\gamma_i s)}.\end{equation*} Because $g_{\pi}(s)$ is a generation function, $g_{\pi}(1) = 1$. So
 \begin{equation*}
\pi_{0}=\frac{\prod_{i=1}^{d}(1-\gamma_i)}{\prod_{i=1}^{d}(1-\lambda_i)},\end{equation*} which complete the proof.
\qed

\noindent {\it Proof of Theorem \ref{gsta}} ~~ Denote $g_{i}(s)$ as the generation function of passage time of state $0$ when the chain is starting from $i$. By the Markov property, we have
\begin{equation}\label{keypi1}
g_{\pi}(s)=\pi(0)g_{0}(s)+\pi(1)g_{1}(s)+\cdots+\pi(d)g_{d}(s).
\end{equation}
Obviously, $g_{0}(s)=1$. By decomposing the first step, for $1\leq i\leq d$, $g_{i}(s)$ satisfies
 \begin{equation*}
\begin{split}
g_{i}(s)&=r_{i}sg_{i}(s)+p_{i,i+1}sg_{i+1}(s)+\cdots p_{i,d-1}sg_{d-1}(s)+p_{i,d}sg_{d}(s)\\
  &~~~~~+q_{i,i-1}sg_{i-1}(s)+q_{i,i-2}sg_{i-2}(s)+\cdots+q_{i,0}s.
\end{split} \end{equation*}
These system of equations together with (\ref{keypi1}) are linear with respect to $g_{\pi}(s),~g_{1}(s),~g_{2}(s)\cdots,~g_{d}(s)$. Use Cramer's Rule, we can get $g_{\pi}(s)$  by solving from these equations as
 \begin{equation*}
g_{\pi}(s)=\frac{\det D_{0}}{\det D}.\end{equation*}
\qed
\begin{remark}\label{R4}Actually, if we define for $i=1,2,\cdots, d$
\begin{equation}\label{tao}
\tau_{i}:=\inf\{n\geq 0, X_n=i \ \},
\end{equation}
 be the passage  time of state $i$.
Denote  $g_{\pi}^{i}(s)$ as the generation function of $\tau_{i}$, i.e., $g_{\pi}^{i}(s)=\mathbb{E}_{\pi}s^{\tau_{i}}$, we can obtain the formula for $g_{\pi}^{i}(s)$ with the corresponding modification for the $D$ and $D_{0}$, the proof is almost line by line with regard of $g_{i}(s)=1$ this time .
\end{remark}

\section{ Continuous time\label{s3}}

We can write the counterpart results for the continuous time Markov chain with finite states $\{0, 1, \cdots, d\}$ easily. The proof is similar as in section 1 and so we omit the details.

\subsection{Starting from any fixed state }
Define $\{X_t\}_{t\geq 0}$ being the (continuous time) Markov chain with finite states $\{0, 1, \cdots, d\}$ and absorbing
at state $d$,  the generator $Q$ is given by
\begin{equation*}
Q=\left(
 \begin{array}{cccccc}
  -\gamma_0 & \alpha_{0,1}  &  & \cdots & \alpha_{0,d-1} & \alpha_{0,d} \\
   \beta_{1,0} & -\gamma_1 & & \cdots & \alpha_{1,d-1}& \alpha_{1,d}\\
   \vdots & \ddots & & \ddots & \ddots & \vdots\\
   \beta_{d-1,0}& \beta_{d-1,1}& & \cdots &  -\gamma_{d-1} & \alpha_{d-1,d} \\
    0 & 0 & & \cdots & 0 & 0\\
     \end{array}
      \right)_{(d+1)\times(d+1)}.
\end{equation*}
 Let $\tau_{i,d}$ be the
absorbing time of state $d$ starting from $i$ and
$\widetilde{f}_{i}(s)$ be
 the Laplace transform of $\tau_{i,d}$. i.e.
\begin{equation*}\widetilde{f}_{i}(s)=\mathbb{E}e^{-s\tau_{i,d}}. \end{equation*}

 \begin{theorem}\label{thc}
For $1\leq j\leq d+1$, we denote $\widetilde{A}_{j}(s)$ as the $d\times d$ sub-matrix by deleting the $(d+1)^{th}$ row and
the $j^{th}$ column of the matrix  $sI_{d+1}-Q$. Then, for $0\leq i\leq d$
we have
\begin{equation}\label{fi1}
\widetilde{f}_{i}(s)=(-1)^{d-i}\frac{\det\widetilde{A}_{i+1}}{\det\widetilde{A}_{d+1}}.
\end{equation} \qed
\end{theorem}

Immediately, we recover the results for the skip-free continuous time
Markov chain (Brown and Shao \cite{BS}, 1987).

 \begin{corollary} Assume  $\alpha_{i,j}=0$ for $j-i>1$. We have
\begin{equation*}
\varphi_{d}(s)=\prod_{i=0}^{d-1}\frac{\lambda_i}{\lambda_i+s},
\end{equation*}
where $\lambda_i$ are the $d$ non-zero eigenvalues of $-Q$.

 In particular, if all of the eigenvalues are real and nonnegative, then the hitting time is distributed as
 the sum of $d$ independent exponential random variables with parameters $\lambda_i$.
 \end{corollary}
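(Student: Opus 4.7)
The plan is to mimic the argument for Corollary 2.1, trading the factorization of $\det(I_{d+1}-sP)$ for the corresponding factorization of $\det(sI_{d+1}-Q)$. Because $Q$ is a generator, $Q\mathbf{1}=0$, so $0$ is an eigenvalue of $Q$. Writing the remaining eigenvalues of $Q$ as $-\lambda_0,\ldots,-\lambda_{d-1}$ (so the $\lambda_i$ are the non-zero eigenvalues of $-Q$, as in the statement), the characteristic polynomial factors as
\[
\det(sI_{d+1}-Q)\;=\;s\prod_{i=0}^{d-1}(s+\lambda_i).
\]
On the other hand, the last row of $sI_{d+1}-Q$ is $(0,\ldots,0,s)$, so expanding the determinant along it gives $\det(sI_{d+1}-Q)=s\cdot\det\widetilde{A}_{d+1}(s)$. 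Equating yields $\det\widetilde{A}_{d+1}(s)=\prod_{i=0}^{d-1}(s+\lambda_i)$, the direct analogue of (2.6).

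Next I compute $\det\widetilde{A}_1(s)$ from the skip-free hypothesis $\alpha_{i,j}=0$ for $j>i+1$. Unwinding the definition, $\widetilde{A}_1(s)$ is the $d\times d$ submatrix of $sI_{d+1}-Q$ obtained by deleting its last row and first column; under the skip-free assumption this submatrix is lower triangular with diagonal entries $-\alpha_{0,1},-\alpha_{1,2},\ldots,-\alpha_{d-1,d}$, so $\det\widetilde{A}_1(s)=(-1)^d\alpha_{0,1}\alpha_{1,2}\cdots\alpha_{d-1,d}$, independent of $s$. Substituting both determinants into the formula of Theorem 3.1 at $i=0$ gives
\[
\widetilde{f}_0(s)\;=\;\frac{\alpha_{0,1}\alpha_{1,2}\cdots\alpha_{d-1,d}}{\prod_{i=0}^{d-1}(s+\lambda_i)}.
\]

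To finish, I identify the numerator with $\prod_i\lambda_i$ by the same normalization trick used in the discrete case: since the finite-state skip-free chain reaches $d$ almost surely, $\widetilde{f}_0(0)=1$, and evaluating the displayed formula at $s=0$ gives $\alpha_{0,1}\cdots\alpha_{d-1,d}=\prod_{i=0}^{d-1}\lambda_i$. The product formula follows. The interpretation as a sum of independent exponentials with parameters $\lambda_i$ when the eigenvalues are real and positive comes from recognizing each factor $\lambda_i/(s+\lambda_i)$ as the Laplace transform of an $\mathrm{Exponential}(\lambda_i)$ random variable, with a partial-fraction decomposition supplying the convolution structure. The step I expect to require the most care is the structural claim that $\widetilde{A}_1(s)$ is triangular in the skip-free case, since it depends on the precise indexing convention for $\widetilde{A}_j$; once that is pinned down, everything else is a direct transcription of the discrete-time proof.
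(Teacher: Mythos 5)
Your proof is correct and follows essentially the same route as the paper's (whose proof is only the remark ``similar as Corollary 2.1'' together with the two determinant values): factor $\det(sI_{d+1}-Q)$ by expanding along the last row to get $\det\widetilde{A}_{d+1}$, observe that $\widetilde{A}_{1}$ is triangular under the skip-free hypothesis, and fix the constant by the normalization $\widetilde{f}_0(0)=1$. One remark: your final formula $\prod_{i=0}^{d-1}\lambda_i/(s+\lambda_i)$ is the sign-consistent one (it is the Laplace transform of a sum of independent $\mathrm{Exponential}(\lambda_i)$ variables when the $\lambda_i$ are the nonzero eigenvalues of $-Q$), whereas the paper's displayed $\prod_i\lambda_i/(s-\lambda_i)$ and its $\det\widetilde{A}_{d+1}=\prod_i(s-\lambda_i)$ contain a sign typo relative to its own conventions, so the discrepancy is the paper's, not yours.
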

 \begin{proof} The proof is similar as Corollary \ref{c1}, we can
 calculate that
${\det\widetilde{A}_{d+1}}=\prod_{i=0}^{d-1}(\lambda_i+s)$, and  $\det\widetilde{A}_{1}=(-1)^{d}\alpha_{0,1} \alpha_{1,2}\cdots
\alpha_{d-1,d}=(-1)^{d}\prod_{i=0}^{d-1}\lambda_i$.
\end{proof}

\subsection{Starting from the stationary distribution}
If we consider a time reversible ergodic Markov chain with generator $\widehat{Q}$, let $\widehat{Q}_{0}$ be the sub-matrix which is obtained by deleting the first row and the first column of $\widehat{Q}$.
We denote $\widetilde{g}_{\pi}(s)$ as the Laplace transform of the hitting time of state $0$ when the chain is starting from the stationary distribution $\pi$.

\begin{theorem} We have
\begin{equation}\label{gpi2}
\widetilde{g}_{\pi}(s)=\left(\prod_{i=1}^{d}\frac{\gamma_{i}}{\lambda_{i}}\right) \prod_{i=1}^{d}\left(\frac{\lambda_i+s}{
\gamma_i+s}\right),
\end{equation}
where $\lambda_1, \cdots, \lambda_{d}$ are the $d$ non-zero eigenvalues of $-\widehat{Q}$ (we assume $\lambda_0=0$), and  $\gamma_1, \cdots, \gamma_{d}$ are the $d$ eigenvalues of $-\widehat{Q}_{0}$.
 \end{theorem}

\bigskip
\noindent\textbf{Acknowledgement}

{  We  appreciate Professor Daniel R. Jeske for his interesting example (see Corollary \ref{c2}) . We would also thank the referee's valuable suggestions.}

\bigskip


\end{document}